\newtheorem{theorem}{Theorem}
\newtheorem{lemma}[theorem]{Lemma}
\newtheorem{prop}{Proposition}
\newtheorem{Que}{Question}
\newtheorem{cor}{Corollary}
\begin{document}
\title{Euler's idoneal numbers and an inequality concerning minimal graphs with a prescribed number of spanning trees} 
\date{\today}
\author{Jernej Azarija \; \; \;Riste \v{S}krekovski\\ Department of Mathematics,\\ University of Ljubljana,\\ Jadranska 21,\\ 1000 Ljubljana,\\ Slovenia\\ \texttt{ \{jernej.azarija,skrekovski\}@gmail.com}}

\maketitle
\begin{abstract}

Let $\alpha(n)$ be the least number $k$ for which there exists a simple graph with $k$ vertices having precisely $n \geq 3$ spanning trees. Similarly, define $\beta(n)$ as the least number $k$ for which there exists a simple graph with $k$ edges having precisely $n \geq 3$ spanning trees. As an $n$-cycle has exactly $n$ spanning trees, it follows that $\alpha(n),\beta(n) \leq n$. In this paper, we show that $\alpha(n) \leq  \frac{n+4}{3}$ and $\beta(n) \leq \frac{n+7}{3} $ if and only if $n \notin \{3,4,5,6,7,9,10,13,18,22\}$, which is a subset of Euler's idoneal numbers. Moreover, if $n  \not \equiv 2 \pmod{3}$ and $n \not = 25$ we show that $\alpha(n) \leq \frac{n+9}{4}$ and $\beta(n) \leq \frac{n+13}{4}.$ This improves some previously estabilished bounds.
\end{abstract}

\noindent
{\bf Keywords:} number of spanning trees; extremal graph  \\

\noindent {\bf AMS Subj. Class. (2010)}:  05C35,  05C05

\section{Introduction}
Results related to the problem of counting spanning trees for a graph date back to 1847. In \cite{Kirchof}, Kirchhoff showed that the number of spanning trees of a graph $G$ is closely related to the cofactor of the Laplacian matrix of $G$.
Later, a number of related results followed. In 1889, Cayley \cite{Cayley} derived the number of spanning trees for the complete graph on $n$ vertices which is $n^{n-2}$. Later formulas for various families of graphs have been derived. For example, it was shown by Baron et al. \cite{Baron} that the number of spanning trees of the square of a cycle $C_n^2$ equals to $nF_n$ where $F_n$ is the $n$'th Fibonacci number. 

Speaking about a seemingly unrelated branch of mathematics, Euler studied around 1778 a special class of numbers allowing him to find large primes. He called such numbers {\em idoneal numbers} ({\em numerus idoneus}). He was able to find 65 such numbers: \begingroup \catcode`,\active \def,{\char`\,\penalty0 } {\em $I$} $= \{1,2,3,4,5,6,7,8,9,10,12,13,15,16,18,21,22,24,25,28,30,33,37,40,42,45,48,57,58,60,70,72,78,85,88,93,102,105,112,120,130,133,165,168,177,190,210,232,240,253,273,280,312,330,345,357,385,408,462,520,760,840,1320,1365,1848\},$ see also \cite{Weil}.  \endgroup 

Gauss \cite{Gauss} conjectured that the set of idoneal numbers {\em I} is complete. It was later proved by Chowla \cite{Chowla} that the set of idoneal numbers is finite. We denote by {\em $I^*$} the set of idoneal numbers not present in {\em $I$} and remark that if the Generalized Riemann Hypothesis is true, then {\em $I^*$} = $\emptyset$ \cite{Weinberger}. It is also known that any idoneal number in {\em $I^*$} has at least six odd prime factors \cite{Pete}. 
In this paper we use the definition of idoneal numbers stating that $n$ is {\em idoneal} if and only if $n$ is not expressible as $n = ab+ac+bc$ for integers $0 < a < b < c$. For other characterizations of idoneal numbers see \cite{Wolfram}. 

We use this number theoretical result to improve the answer related to the question Sedl\'{a}\v{c}ek \cite{Sedlacek} posed in 1970: Given a number $n \geq 3,$ what is the least number $k$ such that there exists a graph on $k$ vertices having precisely $n$ spanning trees? Sedl\'{a}\v{c}ek denoted this function by $\alpha(n)$. He was able to show that $\alpha(n) \leq \frac{n+6}{3}$ for almost all numbers. More precisely he proved that $\alpha(n) \leq \frac{n}{3}+2$ whenever $n \equiv 0 \pmod{3}$ and $\alpha(n) \leq \frac{n+4}{3}$ whenever $n \equiv 2 \pmod{3}$.  
Nebesk\'{y} \cite{Nebresky} later showed that the only fixed points of $\alpha(n)$ are $3,4,5,6,7,10,13$ and $22$, i.e. these are the only numbers $n$ such that $\alpha(n) = n.$ He also defined the function $\beta(n)$ as the least number of edges $l$ for which there exists a graph with $l$ edges and with precisely $n$ spanning trees. He showed that $$ \alpha(n) < \beta(n) \leq \frac{n+1}{2},$$ except for the fixed points of $\alpha$ in which case it holds that $\alpha(n) = \beta(n) = n$. Moreover, as it is observed in \cite{Nebresky}, from the construction used by Sedl\'{a}\v{c}ek \cite{Sedlacek} we have 

$$
\alpha(n) < \beta(n) \leq \left\{
									\begin{array}{ll}
									\frac{n+9}{3} & \mbox{if } n \equiv 0 \pmod{3} \\
									\frac{n+7}{3} & \mbox{if } n \equiv 2 \pmod{3}, 
									\end{array}
\right.	
$$ whenever $n \notin \{3,4,5,6,7,10,13,22\}.$ 

In this paper we improve their result by showing that $$ \beta(n) \leq \frac{n+13}{4}\; , $$ whenever $n \not \equiv 2 \pmod{3}$ and $n \notin \{3,4,6,7,9,13,18,25\}.$ We also prove that $\alpha(22) = \beta(22) = 22$, proof of which in \cite{Nebresky} we found to be incomplete as it only states that there is no graph with cyclomatic number 2 or 3 that has 22 spanning trees and that every graph with a greater cyclomatic number has more than 22 spanning trees.

We will refer to the number of spanning trees of a graph $G$ by $\tau(G)$. Throughout the paper we will often use the following identity used to compute $\tau(G)$: \begin{equation}\label{rec} \tau(G) = \tau(G-e) + \tau(G/e) \end{equation} for every $e \in E(G)$. Here $G/e$ denotes the graph obtained from $G$ by contracting the edge $e$ of $G$ and removing the loop that could possibly be created. Note that the resulting graph may not be simple. If by $G+e$ we denote the graph that is obtained after introducing an edge into $G$ and by $G+P_k$ we denote the graph obtained after interconnecting two vertices of $G$ with a path $P_{k+1}$ of length $k$, then we will occasionally use the fact that: \begin{equation}\label{inequ} \tau(G+e) \geq \tau(G)+2 \quad \hbox{and} \quad \tau(G+P_k) \geq k\,\tau(G)\end{equation} for a connected graph $G$ and $k \geq 2$. The first inequality follows from the fact that we can form at least two spanning trees in $G+e$ that are not spanning trees in $G$ by taking a spanning tree $T$ of $G$ and obtain new trees $T_1$, $T_2$ after removing an edge (not equal to $e$) from the cycle that is obtained in $T+e.$ The second inequality is equally easy to prove.

Graphs with $\alpha(n)$ vertices with $n$ spanning trees possess some structure. For example, it follows directly from equation (\ref{rec}) that graphs having $n$ spanning trees with $\alpha(n)$ vertices are always $2$-edge-connected. A simple argument can then be used to show that such graphs have cycles of length at most $\frac{n}{2}$ provided that $\alpha(n) < n.$

For nonnegative integers $a,b,c$, let $\Theta_{a,b,c}$ be the graph comprised of two vertices connected by three internally disjoint paths of length $a,b$ and $c$, respectively. We refer to these paths as $P_a$, $P_b$ and $P_c$. Note that $\Theta_{a,b,c}$ is simple if and only if at most one of $a,b,c$ equals 1. For $a,b \geq 3$ denote by $C_{a,b}$ the graph obtained after identifying a vertex of an $a$-cycle with a vertex of a disjoint $b$-cycle. Notice that  $\Theta_{a,b,0}$ is isomorphic to $C_{a,b}.$

\section{Lower bounds for the number of spanning trees of graphs derived from $\Theta_{a,b,c}$}

In this section, we examine the number of spanning trees that arise in $\Theta_{a,b,c}$ when interconnecting two distinct vertices by a disjoint path of length $d$. In order to do so we define simple graphs $\Theta_{a,b,c,d}^0(a_1,a_2)$, $\Theta_{a,b,c,d}^1(a_1)$, $\Theta_{a,b,c,d}^2(a_1,b_1)$ that are obtained from $\Theta_{a,b,c}$ by introducing a path. Let $u,v$ be the 3-vertices of $\Theta_{a,b,c}$. 

First we construct $\Theta^0$. We assume $a \geq 3$. For integers $a_1 \geq 1$ and $a_2 \geq 1$ with $a_1 + a_2 < a$, let $x$ and $y$ be the vertices of $P_a$ such that $d_{P_a}(u,x) = a_1$ and $d_{P_a}(v,y) = a_2$. Then $\Theta_{a,b,c,d}^0(a_1,a_2)$ is the graph obtained by interconnecting $x$ and $y$ with a disjoint path of length $d$, see the first graph of Figure \ref{PIC}. As we are only dealing with simple graphs we require that $d > 1$ if $a_1 + a_2 = a-1$. 

We now construct $\Theta^1$. Let $x = u$ and let $y$ be a vertex on $P_a$ such that $d_{P_a}(u,y) = a_1 \geq 1$. Then $\Theta_{a,b,c,d}^1(a_1)$ is the graph obtained by interconnecting $x$ and $y$ by a disjoint path of length  $d$. See the second graph of Figure \ref{PIC}. Notice that the possibility $y = v$ is not excluded; in that case $\Theta_{a,b,c,d}^1(a_1)$ has two 4-vertices. Since we wish that the resulting graph be simple we require $d \geq 2$ whenever $a_1 = 1$ and $d \geq 2$ whenever $\min(a,b,c) = 1$. Moreover, we assume $a \geq 2$. 

Finally, define $\Theta_{a,b,c,d}^2(a_1,b_1)$ by choosing a vertex $x$ on $P_a$, $x \notin \{u,v\}$ such that $d_{P_a}(u,x) = a_1$ and similarly let $y$ be a vertex on $P_b$, $y \notin \{u,v\}$ such that $d_{P_b}(u,y) = b_1$. We always assume $a_1 \geq 1$ and $b_1 \geq 1.$ Then $\Theta_{a,b,c,d}^2(a_1,b_1)$ is the graph obtained by connecting $x$ and $y$ by a disjoint path of length $d$ as shown on the third graph of Figure \ref{PIC}. Observe that this construction requires $a,b \geq 2$, $a-a_1 \geq 1,$ and $b-b_1 \geq 1$ in order to preserve simplicity of $\Theta^2.$

We now present some formulas and inequalities for the number of spanning trees for the graphs we have just defined. The following equalities are easy to derive using equation (\ref{rec}). 

\begin{figure}
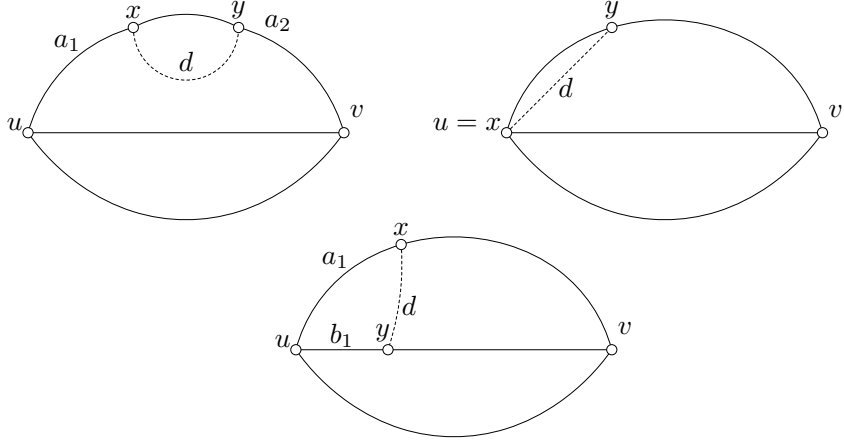

\begin{center}
\includegraphics[scale=0.35]{blah.2} \hbox{   \;\;\;  }
\includegraphics[scale=0.35]{blah.3} \hbox{    \;\;\; } 
\includegraphics[scale=0.35]{blah.4}
\end{center}
\caption{Graphs $\Theta_{a,b,c,d}^0(a_1,a_2), \Theta_{a,b,c,d}^1(a_1), \Theta_{a,b,c,d}^2(a_1,b_1)$ obtained after adding a path of length $d$ between two distinct vertices $x,y$ of $\Theta_{a,b,c}$.}\label{PIC}
\end{figure}

\begin{lemma}\label{LM0}
The following three equalities hold:
\begin{enumerate}
\item[$(1).$] $\tau(\Theta_{a,b,c,d}^0(a_1,a_2))  =  d\,\tau(\Theta_{a,b,c}) + (a-a')\tau(\Theta_{a',b,c}) \; \hbox{ where } \; a_1+a_2 = a'$, 
\item[$(2).$] $\tau(\Theta_{a,b,c,d}^1(a_1))  =  d\,\tau(\Theta_{a,b,c}) + a_1\,\tau(\Theta_{a - a_1,b,c}) $,
\item[$(3).$] $\tau(\Theta_{a,b,c,d}^2(a_1,b_1))  =  d\,\tau(\Theta_{a,b,c}) + c(a_1 + b_1)(a_2+b_2) + a_1a_2b + b_1b_2a \; \hbox{ where } \; a_2 = a-a_1 \; \hbox{ and } \; b_2 = b - b_1.$
\end{enumerate}

\end{lemma}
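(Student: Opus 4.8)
The plan is to reduce all three identities to a single general principle about adding a path between two vertices, and then to analyze the graph obtained by identifying the two endpoints. Concretely, for a connected graph $H$ and two vertices $x,y$, write $H+Q_d$ for the graph obtained by joining $x$ and $y$ with an internally disjoint path of length $d$, and let $H\!\cdot\!xy$ denote the (possibly non-simple) graph obtained from $H$ by identifying $x$ and $y$. First I would prove the exact formula
$$\tau(H+Q_d)=d\,\tau(H)+\tau(H\!\cdot\!xy)$$
by induction on $d$ using (\ref{rec}). For the inductive step, let $e$ be the edge of $Q_d$ incident with $x$; then $\tau((H+Q_d)-e)=\tau(H)$ because deleting $e$ leaves a pendant path hanging at $y$, which does not change the number of spanning trees, while $(H+Q_d)/e=H+Q_{d-1}$, so (\ref{rec}) gives $\tau(H+Q_d)=\tau(H)+\tau(H+Q_{d-1})$. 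The base case $d=1$ is exactly (\ref{rec}) applied to the single edge $xy$. This immediately produces the common leading term $d\,\tau(\Theta_{a,b,c})$ in all three identities and reduces each identity to computing $\tau(\Theta_{a,b,c}\!\cdot\!xy)$ for the appropriate pair $x,y$.

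For identities (1) and (2) I would use the auxiliary fact that attaching a cycle $C_m$ to a connected graph $G_0$ at a single (cut) vertex multiplies the tree count by $m$: a spanning tree restricts to a spanning tree of $G_0$ and omits exactly one of the $m$ cycle edges, so $\tau=m\,\tau(G_0)$. In case (1), identifying the two points $x,y$ of $P_a$ turns the segment of length $a-a'$ between them into a cycle attached at the merged vertex, while the rest of the graph is exactly $\Theta_{a',b,c}$; hence $\tau(\Theta_{a,b,c}\!\cdot\!xy)=(a-a')\tau(\Theta_{a',b,c})$. In case (2), identifying $u$ with the point $y$ at distance $a_1$ creates a cycle of length $a_1$ attached at the merged vertex, with the remainder equal to $\Theta_{a-a_1,b,c}$, giving $a_1\,\tau(\Theta_{a-a_1,b,c})$. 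I would also check that the degenerate values ($a-a'=1$ or $a_1=1$, producing a loop, and the boundary case $y=v$) are consistent with these formulas.

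Identity (3) is where the real work lies, and I expect it to be the main obstacle. Here identifying $x\in P_a$ with $y\in P_b$ merges them into a vertex $z$ and produces a graph on three branch vertices $u,z,v$ joined by five internally disjoint paths: two $u$--$z$ paths of lengths $a_1,b_1$, two $z$--$v$ paths of lengths $a_2,b_2$, and one $u$--$v$ path of length $c$. To count its spanning trees I would pass to the reduced multigraph $R$ on $\{u,z,v\}$ carrying these five edges, and observe that a spanning tree of the subdivided graph corresponds to a choice of a spanning tree $T$ of $R$ (the paths taken whole) together with, for each of the remaining paths, a choice of one of its $\ell_e$ edges to delete; a path of length $\ell$ must be taken whole or broken at exactly one edge, since breaking it at two or more edges would isolate an internal vertex. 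This yields
$$\tau(\Theta_{a,b,c}\!\cdot\!xy)=\sum_{T\in \mathrm{ST}(R)}\ \prod_{e\notin T}\ell_e,$$
where the sum runs over the spanning trees of $R$ and $\ell_e$ is the length of the path corresponding to $e$. Since $R$ has only three vertices, each spanning tree uses two of the three ``sides,'' and summing the three resulting groups of products gives $c(a_1+b_1)(a_2+b_2)+a_1a_2b+b_1b_2a$ after substituting $a=a_1+a_2$ and $b=b_1+b_2$; the only nonroutine point is verifying that the two ``mixed'' groups combine to $a_1a_2b+b_1b_2a$. Combining each computed value of $\tau(\Theta_{a,b,c}\!\cdot\!xy)$ with the leading term from the path lemma then yields all three equalities.
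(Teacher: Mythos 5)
Your proposal is correct, and it supplies more detail than the paper itself does: the paper states Lemma \ref{LM0} with only the remark that the equalities are easy to derive using equation (\ref{rec}), leaving the deletion--contraction computation to the reader. Your route organizes that computation into three reusable facts, each of which is sound: (i) the path-gluing identity $\tau(H+Q_d)=d\,\tau(H)+\tau(H\cdot xy)$, whose induction is valid because deleting the edge of $Q_d$ at $x$ leaves a pendant path (forced into every spanning tree, hence $\tau$ is unchanged) while contracting it gives $H+Q_{d-1}$; (ii) multiplicativity of $\tau$ over blocks, which settles (1) and (2) since identifying the two chosen vertices turns the intermediate segment of $P_a$ into a cycle of length $a-a'$ (resp.\ $a_1$) attached at a cut vertex of a copy of $\Theta_{a',b,c}$ (resp.\ $\Theta_{a-a_1,b,c}$); and (iii) the subdivision formula $\tau=\sum_{T}\prod_{e\notin T}\ell_e$ over spanning trees $T$ of the reduced multigraph, which is exactly what (3) needs. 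I checked the final computation in (3): the four spanning trees of $R$ consisting of one $u$--$z$ and one $z$--$v$ path contribute $c(a_1+b_1)(a_2+b_2)$, and the four trees containing the $c$-path contribute $a_2b_2(a_1+b_1)+a_1b_1(a_2+b_2)$, which indeed expands to $a_1a_2b+b_1b_2a$; your degenerate cases (loops when $a-a'=1$ or $a_1=1$, and $y=v$ giving $a\,\tau(\Theta_{0,b,c})=abc$) are also consistent. Compared with the paper's implicit plan of grinding out (\ref{rec}) directly on each graph, your structural argument costs two auxiliary lemmas but makes the origin of each term in the formulas visible, and the weighted-subdivision identity generalizes immediately to attaching paths to arbitrary graphs, in the spirit of the generalization the authors mention at the end of the paper.
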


We use the identities presented in Lemma \ref{LM0} in order to derive some lower bounds for the number of spanning trees for the graphs $\Theta^0$, $\Theta^1$ and $\Theta^2.$

\begin{lemma}\label{LM1}
We have $$\tau(\Theta_{a,b,c,d}^0(a_1,a_2)) \geq \left\{ 
\begin{array}{ll} 
		(d+\frac{1}{2})\tau(\Theta_{a,b,c}) & \mbox{if } a = 3 \hbox{\;or\;} d \geq 2,  \\ 
		(d+1)\,\tau(\Theta_{a,b,c}) & \mbox{if } a \geq 4 \hbox{\;and\;} d = 1.
\end{array} \right.$$
\end{lemma}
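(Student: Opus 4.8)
The plan is to read everything off part~$(1)$ of Lemma~\ref{LM0}, namely
$$\tau(\Theta_{a,b,c,d}^0(a_1,a_2)) = d\,\tau(\Theta_{a,b,c}) + (a-a')\,\tau(\Theta_{a',b,c}),\qquad a'=a_1+a_2.$$
Since both target bounds already contain the summand $d\,\tau(\Theta_{a,b,c})$, the entire lemma reduces to estimating the single correction term $(a-a')\,\tau(\Theta_{a',b,c})$ from below. Precisely, I would show that this term is at least $\tfrac12\,\tau(\Theta_{a,b,c})$ in general, and at least $\tau(\Theta_{a,b,c})$ in the regime $a\ge 4$, $d=1$; adding $d\,\tau(\Theta_{a,b,c})$ then yields the two claimed inequalities.

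To estimate the correction term I would use the closed form $\tau(\Theta_{a,b,c})=ab+bc+ca$, which follows from iterating~(\ref{rec}) (or from $\Theta_{a,b,0}\cong C_{a,b}$). Setting $s=b+c$, $p=bc$ and $m=a-a'$, we have $\tau(\Theta_{a,b,c})=as+p$ and $(a-a')\,\tau(\Theta_{a',b,c})=m\bigl((a-m)s+p\bigr)=m(a-m)\,s+m\,p$. Thus the whole statement comes down to comparing the weighted sum $m(a-m)\,s+m\,p$ against $\tfrac12(as+p)$, respectively $(as+p)$, which by a coefficientwise argument (if $\alpha\ge r\gamma$ and $\beta\ge r\delta$ then $\alpha s+\beta p\ge r(\gamma s+\delta p)$ for $s,p\ge 0$) boils down to controlling the single coefficient $m(a-m)$.

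The crux is therefore the admissible range of $m$, which the construction of $\Theta^0$ dictates. The requirements $a_1,a_2\ge 1$ and $a_1+a_2<a$ give $2\le a'\le a-1$, i.e. $1\le m\le a-2$; and the simplicity condition ``$d>1$ if $a_1+a_2=a-1$'' forces $a'\le a-2$, i.e. $m\ge 2$, precisely when $d=1$. On $1\le m\le a-2$ the downward parabola $m(a-m)$ is minimised at the endpoints, giving $m(a-m)\ge a-1$; on $2\le m\le a-2$ it gives $m(a-m)\ge 2(a-2)$. For the general bound I would then check $\tfrac{m(a-m)}{a}\ge\tfrac{a-1}{a}\ge\tfrac12$ and $m\ge 1\ge\tfrac12$, so both coefficient ratios dominate $\tfrac12$; for the stronger bound I would use $m(a-m)\ge 2(a-2)\ge a$ (valid exactly because $a\ge 4$) and $m\ge 2\ge 1$, so both ratios dominate $1$.

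Finally I would observe that the two regimes tile all admissible parameters: $a=3$ forces $a'=2=a-1$ and hence $d\ge 2$, so the line ``$a=3$ or $d\ge 2$'' already absorbs the case $a=3$, while ``$a\ge 4$, $d=1$'' is exactly where $m\ge 2$ becomes available. I do not anticipate a genuine obstacle; the only delicate point is that the stronger bound really needs $m\ge 2$, and hence the simplicity constraint for $d=1$ — without it the case $a'=a-1$ would give $\tau(\Theta_{a-1,b,c})<\tau(\Theta_{a,b,c})$ and the bound $(d+1)\,\tau(\Theta_{a,b,c})$ would be false.
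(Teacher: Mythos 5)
Your proposal is correct and follows essentially the same route as the paper: both reduce everything to Lemma \ref{LM0}.(1), exploit concavity of the quadratic correction term in the position parameter, and check the endpoints of the admissible range dictated by $a_1,a_2\ge 1$, $a_1+a_2<a$ and the simplicity condition forcing $m\ge 2$ when $d=1$. The only difference is bookkeeping: where the paper verifies the endpoint inequalities by direct expansion in $b$ and $c$, you compare the coefficients of $s=b+c$ and $p=bc$ separately, which is a slightly cleaner but equivalent verification.
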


\begin{proof}
Let $\rho(x) = (a-x)(bc+x(b+c))$. By Lemma \ref{LM0}.(1) we have to show

$$\rho(x) \geq   \left\{
\begin{array}{ll} 
		\frac{1}{2}\tau(\Theta_{a,b,c}) & \mbox{if } a = 3 \hbox{\;or\;} d \geq 2,  \\ 
		\tau(\Theta_{a,b,c}) & \mbox{if } a \geq 4 \hbox{\;and\;} d = 1.
\end{array} \right.$$
where $x = a_1 + a_2$ As $\rho$ is a quadratic concave function it is enough to verify the claim for $x \in \{2, a-1\}$ if $d \geq 2$ and $x \in \{2,a-2\}$ if $d = 1$.
Suppose first that $x = 2$, i.e. $a_1 = a_2 = 1$. If $a = 3$ then $\rho(2) = bc+2b+2c \geq \frac{1}{2}(3b+3c+bc) = \frac{1}{2}\tau(\Theta_{a,b,c})$ and if $a \geq 4$, then
\begin{eqnarray}
\rho(2) & = & abc + 2ab + 2ac - 2bc - 4b - 4c \nonumber \\ 
& \geq & (bc+ab+ac)+ (3bc-2bc+ab+ac-4b-4c)  \nonumber \\
& \geq & ab+ac+bc \nonumber \\
& = & \tau(\Theta_{a,b,c}). \nonumber
\end{eqnarray}

Suppose now that $d \geq 2$ and $x=a-1$. Then, we have $\rho(a-1) = bc + (a-1)(b+c) \geq \frac{1}{2}(ab+ac+bc)$ as $a \geq 3.$ Finally, assume $d = 1$ and hence $a \geq 4.$ Then $\rho(a-2) = 2bc+2(a-2)b+2(a-2)c \geq ab+bc+ac$ as $2(a-2) \geq a.$
\end{proof}

\begin{lemma}\label{LM2}
$$\tau(\Theta_{a,b,c,d}^1)(a_1) \geq (d+\frac{1}{2})\tau(\Theta_{a,b,c}).$$
\end{lemma}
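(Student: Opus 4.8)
The plan is to mimic the proof of Lemma~\ref{LM1} by reducing the claimed inequality to a purely polynomial statement via the explicit formula of Lemma~\ref{LM0}.(2). Recall that $\tau(\Theta_{a,b,c})=ab+ac+bc$, and that Lemma~\ref{LM0}.(2) gives $\tau(\Theta_{a,b,c,d}^1(a_1))=d\,\tau(\Theta_{a,b,c})+a_1\,\tau(\Theta_{a-a_1,b,c})$. Since $d\,\tau(\Theta_{a,b,c})$ already appears on both sides, after subtracting it the inequality $\tau(\Theta_{a,b,c,d}^1(a_1))\ge(d+\tfrac12)\tau(\Theta_{a,b,c})$ becomes equivalent to showing
\begin{equation}\label{eq:plan-reduction}
a_1\,\tau(\Theta_{a-a_1,b,c})\;\ge\;\tfrac12\,\tau(\Theta_{a,b,c}).
\end{equation}
Writing this out, I set $\sigma(x)=x\bigl((a-x)(b+c)+bc\bigr)$ with $x=a_1$, so that the left side of \eqref{eq:plan-reduction} is $\sigma(a_1)$ and I must prove $\sigma(x)\ge\tfrac12(ab+ac+bc)$ for the admissible range of $x$.

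First I would identify the feasible range of $x=a_1$. By the construction of $\Theta^1$ we have $a_1\ge 1$, and since $y$ lies on $P_a$ with the possibility $y=v$ allowed we have $a_1\le a$; thus $x\in\{1,2,\dots,a\}$. Next I would observe that $\sigma(x)=x\bigl((a-x)(b+c)+bc\bigr)$ is a concave quadratic in $x$ (the coefficient of $x^2$ is $-(b+c)<0$), so on any interval its minimum is attained at an endpoint. Hence it suffices to verify \eqref{eq:plan-reduction} at the two extreme admissible values $x=1$ and $x=a$, exactly in the spirit of the endpoint reduction used in Lemma~\ref{LM1}.

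The endpoint checks are then short and routine. At $x=a$ one gets $\sigma(a)=a\cdot bc$, and $a\,bc\ge\tfrac12(ab+ac+bc)$ holds comfortably once $b,c\ge1$ and $a\ge2$ (indeed $a\,bc\ge ab+ac+bc$ already, since $bc\ge\max(b,c)$ and $a\ge2$). At $x=1$ one gets $\sigma(1)=(a-1)(b+c)+bc$, and one must check $(a-1)(b+c)+bc\ge\tfrac12(ab+ac+bc)$, i.e. $(\tfrac{a}{2}-1)(b+c)+\tfrac12 bc\ge0$, which holds whenever $a\ge 2$. Since the construction assumes $a\ge2$, both endpoints satisfy the bound, and concavity propagates it to all intermediate $x$.

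The main obstacle I anticipate is not the arithmetic but the careful bookkeeping of the admissible range and the boundary conventions for $x$: one must confirm that the endpoint $y=v$ (giving $a_1=a$, and a graph with two $4$-vertices rather than two $3$-vertices) is genuinely covered by the formula of Lemma~\ref{LM0}.(2) — in that degenerate case $\Theta_{a-a_1,b,c}=\Theta_{0,b,c}$ is a multigraph with $\tau=bc$, consistent with $\sigma(a)=a\,bc$ above — and that the simplicity constraints (namely $d\ge2$ when $a_1=1$ or $\min(a,b,c)=1$) do not shrink the relevant range of $x$, since those constraints restrict $d$ rather than $a_1$. Once this is settled, the concavity-plus-endpoints argument closes the proof without the case split on $a$ or $d$ that appeared in Lemma~\ref{LM1}, which explains why the bound here is the uniform $(d+\tfrac12)\tau(\Theta_{a,b,c})$.
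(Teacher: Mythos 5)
Your overall strategy is exactly the paper's: apply Lemma~\ref{LM0}.(2), subtract the common term $d\,\tau(\Theta_{a,b,c})$, view the remainder as a concave quadratic in $x=a_1$, and check the two endpoints $x=1$ and $x=a$. The $x=1$ check is correct. However, your $x=a$ check contains a genuine error. You assert that $a\,bc\ge \tfrac12(ab+ac+bc)$ ``holds comfortably once $b,c\ge 1$ and $a\ge 2$,'' and justify it with the stronger claim $abc\ge ab+ac+bc$. The stronger claim is simply false in the relevant range: for $a=b=c=2$ one has $abc=8<12=ab+ac+bc$, and for $a=2,b=2,c=1$ one has $4<8$. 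Worse, even the weaker inequality you actually need fails under the hypotheses you state: with $b=c=1$ and any $a\ge 2$, $abc=a<a+\tfrac12=\tfrac12(ab+ac+bc)$. So as written, the endpoint $x=a$ is not established.

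The missing ingredient is the simplicity constraint on $\Theta_{a,b,c}$: at most one of $a,b,c$ equals $1$, and since $a\ge 2$ in the $\Theta^1$ construction, at least one of $b,c$ is at least $2$. This is precisely what the paper's proof invokes: say $b\ge 2$, so that $ab\ge a+b$ (since $(a-1)(b-1)\ge 1$), and then
$$2abc \;=\; abc+abc \;\ge\; (a+b)c+abc \;=\; ac+bc+abc \;\ge\; ab+ac+bc,$$
using $abc\ge ab$ in the last step. With that replacement your argument closes; but the case $b=c=1$ must be excluded by an appeal to simplicity, not by the arithmetic you gave, and the parenthetical claim $abc\ge ab+ac+bc$ should be deleted since it is false even for legitimate simple instances such as $(a,b,c)=(2,2,2)$.
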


\begin{proof}
By Lemma \ref{LM0}.(2) it is enough to show $\rho(x) \geq \frac{1}{2}(ab+ac+bc)$ for each $x \in \left[1,a\right]$, where $\rho(x) = x(bc+b(a-x) + c(a-x))$. As $\rho$ is a quadratic concave function, it is enough to show this inequality only for $x = 1$ and $x = a$.

For $x = 1$, this inequality reduces to $ab+bc+ac \geq 2c+2b$ which trivially holds for $a \geq 2$. 

For $x = a$, we have to show the inequality $abc \geq \frac{1}{2}(ab+bc+ac)$. Notice that $a \geq 2$ and at least one of $b$,$c$ is $\geq 2$, say $b$. Then $ab \geq a+b$ and hence: $$2abc \geq (a+b)c+abc \geq ab+bc+ac.$$
\end{proof}

\begin{lemma}\label{LM3} 
We have $$\tau(\Theta_{a,b,c,d}^2)(a_1,b_1) \geq (d+1)\,\tau(\Theta_{a,b,c}).$$
\end{lemma}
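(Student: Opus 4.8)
The plan is to mirror the proofs of Lemmas~\ref{LM1} and~\ref{LM2}: start from the exact formula in Lemma~\ref{LM0}.(3), isolate the extra term beyond $d\,\tau(\Theta_{a,b,c})$, and show this term is at least $\tau(\Theta_{a,b,c}) = ab+ac+bc$. Writing $a_2 = a-a_1$ and $b_2 = b-b_1$, Lemma~\ref{LM0}.(3) gives
$$
\tau(\Theta_{a,b,c,d}^2(a_1,b_1)) = d\,\tau(\Theta_{a,b,c}) + \underbrace{c(a_1+b_1)(a_2+b_2) + a_1a_2\,b + b_1b_2\,a}_{=:S(a_1,b_1)},
$$
so it suffices to prove $S(a_1,b_1) \geq ab+ac+bc$ for all admissible $a_1,b_1$ (recall $1 \leq a_1 \leq a-1$ and $1 \leq b_1 \leq b-1$, with $a,b \geq 2$).

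\textbf{First} I would try to exploit concavity, as in the previous two lemmas. For fixed $b_1$, the function $a_1 \mapsto S(a_1,b_1)$ is a concave quadratic in $a_1$ (the $a_1a_2 b = a_1(a-a_1)b$ piece is concave and the cross term $c(a_1+b_1)(a-a_1+b_2)$ is also concave in $a_1$), and symmetrically in $b_1$. Hence $S$ attains its minimum over the box $[1,a-1]\times[1,b-1]$ at one of the four corners $(a_1,b_1) \in \{1,a-1\}\times\{1,b-1\}$. This reduces the problem to checking the inequality $S \geq ab+ac+bc$ at these four extreme configurations. \textbf{The main obstacle} will be that, unlike the single-variable cases, concavity in each variable separately does not immediately localize the minimum to the corners of a two-dimensional region unless $S$ is jointly concave or one argues coordinate-wise; so I would first verify that minimizing in $a_1$ (landing at an endpoint) and then in $b_1$ (again an endpoint) is legitimate, i.e. that the one-variable concavity argument can be iterated.

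\textbf{Then} I would evaluate $S$ at the corners. By the $a \leftrightarrow b$, $a_1 \leftrightarrow b_1$ symmetry of $S$, only two essentially distinct cases arise: $(a_1,b_1)=(1,1)$ and $(a_1,b_1)=(1,b-1)$ (equivalently $(a-1,1)$). For the corner $(1,1)$ one gets $S(1,1) = c(2)(a+b-2) + (a-1)b + (b-1)a = 2c(a+b-2) + 2ab - a - b$, and I would check $2ab - a - b + 2c(a+b-2) \geq ab + ac + bc$, i.e. $ab + ac + bc - a - b - 2c \geq 0$, which holds since $a,b \geq 2$ forces $ab \geq a+b$ and $ac+bc \geq 2c \geq \ldots$ — a short estimate of the type used at the end of Lemma~\ref{LM2}. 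For the corner $(1,b-1)$ I would substitute $b_2 = 1$ and perform the analogous elementary bound. Both reduce to verifying a positive combination of products of integers $\geq 2$, exactly the flavor of the computations already carried out in the preceding two lemmas, so once the corner reduction is justified the remaining work is routine case arithmetic.

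One caveat I would keep in mind: the admissible range here is $a_1,b_1 \geq 1$ with $a_1 \leq a-1$, $b_1 \leq b-1$, so the relevant endpoints are $1$ and $a-1$ (not $a$), and I must ensure the concavity argument uses these as the extreme points. If the clean corner-reduction stalls, the fallback is to bound each of the three summands of $S$ from below directly: $a_1 a_2 b \geq (a-1)b$ when $a_1=1$ or $a_1=a-1$, $b_1b_2 a \geq (b-1)a$ similarly, and $c(a_1+b_1)(a_2+b_2) \geq 2c(a+b-2)/\text{(something)}$, then assemble these into $ab+ac+bc$. This direct route avoids any subtlety about two-dimensional minima at the cost of slightly looser but still sufficient estimates.
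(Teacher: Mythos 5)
Your proof is correct, but it takes a genuinely different route from the paper's. The paper uses no concavity or corner-reduction argument for this lemma at all: it splits the excess term $S$ into two pieces and bounds each directly, namely $c(a_1+b_1)(a_2+b_2) \geq c(a+b) = ac+bc$, which follows from the fact that $xy \geq x+y$ for $x,y \geq 2$ (applied with $x = a_1+b_1$, $y = a_2+b_2$, whose sum is $a+b$), and $a_1a_2b + b_1b_2a \geq ab$, which follows from $a_1a_2 \geq \frac{a}{2}$ and $b_1b_2 \geq \frac{b}{2}$ (via $xy \geq \frac{x+y}{2}$ for $x,y \geq 1$). This entirely sidesteps the two-dimensional minimization issue that you rightly flag as the main obstacle of your plan. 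Your corner reduction is nevertheless legitimate: coordinate-wise concavity does iterate, because after minimizing in $a_1$ the function $b_1 \mapsto \min\bigl(S(1,b_1),\,S(a-1,b_1)\bigr)$ is a pointwise minimum of concave functions, hence concave, so the minimum over the box $[1,a-1]\times[1,b-1]$ is attained at a corner. Your corner checks then go through, modulo one arithmetic slip: at $(a_1,b_1)=(1,1)$ the required inequality is $ab+ac+bc-a-b-4c \geq 0$ (you wrote $-2c$), which still holds since $ab \geq a+b$ and $ac+bc \geq 4c$ for $a,b \geq 2$; the corner $(1,b-1)$ gives $S = abc + 2ab - a - b$ and reduces to $(c+1)(ab-a-b) \geq 0$. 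In short, your approach buys uniformity with the method of Lemmas \ref{LM1} and \ref{LM2}, while the paper's decomposition buys brevity and avoids optimization altogether.
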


\begin{proof} 
By Lemma \ref{LM0}.(3), we have to show that $$c(a_1+b_1)(a_2+b_2) + a_1a_2b + b_1b_2a \geq ac+bc+ab.$$ It will be enough if we show that: $$(a_1+b_1)(a_2+b_2) \geq a+b \quad \hbox{\; and \;} \quad a_1a_2b+b_1b_2a \geq ab.$$
The first inequality follows immediately by the known fact $xy \geq x+y$ for $x,y \geq 2$ (just set $x = a_1+b_1$ and $y = a_2+b_2$).

For the second inequality, it is enough to observe that $a_1a_2 \geq \frac{a}{2}$ and $b_1b_2 \geq \frac{b}{2}$ as $xy \geq \frac{x+y}{2}$ whenever $x,y \geq 1$.
\end{proof}
Using the derived bounds from Lemmas \ref{LM1}.--\ref{LM3}. we can state the following Corollary that turns out to be useful in the next section. 

\begin{cor}\label{Cor}
Let $G$ be the graph obtained after interconnecting two vertices of $\Theta_{a,b,c}$ with a $d$-path $(d\geq 1).$ Then
$$
\tau(G) \geq \left\{
									\begin{array}{ll}
									3\tau(\Theta_{a,b,c})/2 & d = 1, \\
									5\tau(\Theta_{a,b,c})/2 & d = 2, \\
									d\tau(\Theta_{a,b,c}) & d \geq 3. \\ 
									\end{array}
\right.	
$$
\end{cor}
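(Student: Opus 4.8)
The plan is to prove Corollary \ref{Cor} by case analysis on the length $d$ and on which of the three construction types ($\Theta^0$, $\Theta^1$, $\Theta^2$) produced the graph $G$, invoking the three preceding lemmas to supply the per-case bound. Since the hypothesis only says that $G$ arises by joining two vertices of $\Theta_{a,b,c}$ with a $d$-path, $G$ must be one of $\Theta^0_{a,b,c,d}(a_1,a_2)$, $\Theta^1_{a,b,c,d}(a_1)$, or $\Theta^2_{a,b,c,d}(a_1,b_1)$ (the two endpoints lie on the same path, one endpoint is a $3$-vertex, or the endpoints lie on two different paths). For each of these three families I have a lower bound from Lemmas \ref{LM1}--\ref{LM3}, and the corollary is just the pointwise minimum of those three bounds, organized by the value of $d$.

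Concretely, the step I would carry out first is to record the three available bounds: Lemma \ref{LM2} gives $\tau(\Theta^1) \ge (d+\tfrac12)\tau(\Theta_{a,b,c})$ for every $d \ge 1$; Lemma \ref{LM3} gives $\tau(\Theta^2) \ge (d+1)\tau(\Theta_{a,b,c})$ for every $d \ge 1$; and Lemma \ref{LM1} gives $\tau(\Theta^0) \ge (d+\tfrac12)\tau(\Theta_{a,b,c})$ in the regime $a=3$ or $d \ge 2$, and $\tau(\Theta^0) \ge (d+1)\tau(\Theta_{a,b,c})$ when $a \ge 4$ and $d=1$. The weakest of these bounds governs the corollary. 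The coefficient of $\tau(\Theta_{a,b,c})$ is at least $d+\tfrac12$ in \emph{all} cases and for all types, so I would next simply substitute the relevant value of $d$ into $d+\tfrac12$.

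For $d=1$ this yields the coefficient $\tfrac32$, matching the stated $3\tau(\Theta_{a,b,c})/2$; the only subtlety is that $\Theta^0$ with $a \ge 4$ actually gives the larger coefficient $2$, but since $\tfrac32 \le 2$ the uniform claim $3\tau/2$ still holds. For $d=2$, every applicable bound has coefficient at least $d+\tfrac12 = \tfrac52$ (note that Lemma \ref{LM1}'s weaker branch applies here, since $d \ge 2$), giving $5\tau(\Theta_{a,b,c})/2$. For $d \ge 3$, the coefficient $d+\tfrac12 \ge d$ gives the claimed $d\tau(\Theta_{a,b,c})$, which is a deliberately looser but cleaner bound suited to the next section. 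Thus each line of the corollary follows by taking the minimum over the three construction types and applying the appropriate lemma.

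I do not anticipate a genuine obstacle here, since the corollary is a bookkeeping consolidation of results already proved; the only thing to be careful about is the \emph{completeness of the case split}, namely verifying that joining any two vertices of $\Theta_{a,b,c}$ by a path really does land in one of the three defined families (up to symmetry among the paths $P_a,P_b,P_c$) so that one of the three lemmas always applies. If either endpoint of the new path is a $3$-vertex $u$ or $v$ the graph is a $\Theta^1$; if both endpoints are interior vertices of the same path it is a $\Theta^0$; and if they are interior to two distinct paths it is a $\Theta^2$. Once that exhaustiveness is noted, the numerical inequalities are immediate from $d+\tfrac12$, and the proof is complete.
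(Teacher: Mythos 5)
Your case split is not exhaustive, and the missing case is precisely the one that forces the corollary's statement to read $d\,\tau(\Theta_{a,b,c})$ rather than $(d+\tfrac12)\tau(\Theta_{a,b,c})$ for $d \geq 3$. You assume the two endpoints $x,y$ of the new path are distinct, so that $G$ falls into one of the families $\Theta^0$, $\Theta^1$, $\Theta^2$. But the hypothesis also allows $x = y$: a path of length $d \geq 3$ attached at a single vertex of $\Theta_{a,b,c}$ (for $d = 1$ or $d = 2$ this would create a loop or parallel edges, so simplicity rules it out there, which is why your analysis is fine for the first two lines). When $x = y$, the resulting graph has $C_d$ and $\Theta_{a,b,c}$ as blocks meeting at a cut vertex, so $\tau(G) = d\,\tau(\Theta_{a,b,c})$ \emph{exactly}. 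Hence your claim that ``the coefficient is at least $d+\tfrac12$ in all cases and for all types'' is false, and the third line of the corollary is not a ``deliberately looser but cleaner bound'' --- it is tight. This is exactly how the paper argues: for distinct $x,y$ and $d \geq 3$ the lemmas give at least $\tfrac72\tau(\Theta_{a,b,c})$, and the separate case $x=y$ gives equality $d\,\tau(\Theta_{a,b,c})$.

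The omission matters beyond this proof: the corollary is later applied to subgraphs built by attaching a path to a $\Theta$-graph in an arbitrary way (see the proofs of Propositions \ref{FIXEDPOINT} and \ref{prop1}, where the phrase ``unless the path interconnects the same vertex'' appears and the resulting count $3 \cdot 8 = 24$ is used), so the $x = y$ configuration genuinely occurs and must be covered. Your treatment of the distinct-endpoint cases otherwise agrees with the paper's proof (minimum of the bounds from Lemmas \ref{LM1}--\ref{LM3}, with the $a \geq 4$, $d=1$ branch of Lemma \ref{LM1} correctly noted as giving the stronger coefficient $2$); adding the one-vertex case would complete the argument.
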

\begin{proof}
Let $G$ be the graph obtained after interconnecting two vertices $x,y$ of $\Theta_{a,b,c}$ with a $d$-path. If $d = 1$ or $d = 2$ then $x$ and $y$ are distinct and we see from Lemmas \ref{LM1}, \ref{LM2} and \ref{LM3} that the stated inequality holds. If $d \geq 3$ and $x$, $y$ are distinct it follows from Lemmas \ref{LM1}.--\ref{LM3}. that the number of spanning trees of $G$ is at least $\frac{7}{2}$ times greater than $\tau(\Theta_{a,b,c})$. However, if $x = y$ then we obtain a graph that has $C_d$ and $\Theta_{a,b,c}$ as blocks and thus $\tau(G) = d\,\tau(\Theta_{a,b,c}),$ which implies our claim.
\end{proof}

\section{Inequality with functions $\alpha$ and $\beta$}
This section presents our main result. We derive some bounds and equalities for $\alpha$ and $\beta$ for some small values and afterwards we show that for $n \not \equiv 2 \pmod{3}$
$$ \beta(n) \leq \frac{n+13}{4}, $$ 
except for a few cases.

\begin{prop}\label{FIXEDPOINT}
$\alpha(22) = \beta(22) = 22.$ 
\end{prop}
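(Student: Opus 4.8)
The plan is to reduce to $2$-edge-connected graphs and then split the analysis by the cyclomatic number $\mu = |E|-|V|+1$. First I would argue that a simple graph $G$ with $\tau(G)=22$ and a minimum number of vertices (resp.\ edges) is $2$-edge-connected: the vertex case is already recorded in the text, and for edges note that a bridge $e$ splits $G$ into parts $G_u,G_v$ with $\tau(G)=\tau(G_u)\tau(G_v)$; since $22=2\cdot 11$ while no simple graph has exactly $2$ spanning trees (a nontree has a cycle, hence $\tau\ge 3$), one part must be a tree and can be deleted without changing $\tau$, contradicting minimality (pendant vertices are removed the same way). Hence it suffices to prove that the only $2$-edge-connected simple graph with $22$ spanning trees is $C_{22}$, which has exactly $22$ vertices and $22$ edges; the reduction then yields $\alpha(22),\beta(22)\ge 22$, and $C_{22}$ supplies the matching upper bound.

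The case $\mu\le 2$ is where the idoneal property enters. If $\mu=1$ then $G$ is a cycle $C_k$ with $\tau=k$, forcing $k=22$. If $\mu=2$ then $G$ is either a theta graph $\Theta_{a,b,c}$, with $\tau=ab+bc+ca$, or two cycles sharing a vertex $C_{a,b}$ with $a,b\ge 3$, with $\tau=ab$. Because $22$ is idoneal, $ab+bc+ca=22$ has no solution in distinct positive integers; I would dispatch the remaining possibilities $a=b$, $b=c$, $a=b=c$ using $3a^2\le ab+bc+ca=22$, which forces $a\le 2$ and leaves only a couple of equations to rule out by hand. Likewise $ab=22$ has no factorization with both factors $\ge 3$. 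Thus no $2$-edge-connected graph with $\mu\le 2$ other than $C_{22}$ reaches $22$.

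For $\mu\ge 3$ I would use the multiplicative lower bounds to confine the search to finitely many graphs. Through an ear decomposition, such a $G$ arises from a $\mu=2$ graph ($\Theta_{a,b,c}$ or $C_{a,b}=\Theta_{a,b,0}$) by adding a path, so Corollary~\ref{Cor} gives $\tau(G)\ge\tfrac32\,\tau(\Theta_{a,b,c})$, and $\tau(G)=22$ forces the base to satisfy $\tau\le 14$. Only a short list of bases qualifies, namely $\Theta_{1,2,2},\Theta_{1,2,3},\Theta_{2,2,2},\Theta_{1,2,4},C_{3,3},C_{3,4}$; for each of these, and for each choice of endpoints and length $d$ of the added ear, Lemma~\ref{LM0} yields $\tau(G)$ explicitly, and I would verify that none of these values equals $22$. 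For $\mu\ge 4$ I would first record that adding an ear never decreases $\tau$, since $\tau(G'+P_d)\ge d\,\tau(G')\ge\tau(G')$ by inequality~(\ref{inequ}); hence the minimum of $\tau$ over $2$-edge-connected simple graphs is nondecreasing in $\mu$, so it remains only to check $\mu=4$. Using that subdividing an edge strictly increases $\tau$ (one has $\tau=\tau(G)+\tau(G-e)$ for the subdivided graph), this reduces to a finite check on the few graphs with no suppressible degree-$2$ vertex, all of which exceed $22$.

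The main obstacle is the finite-but-delicate case analysis for $\mu=3$: the lower bound only caps the base at $\tau\le 14$, and for the heaviest admissible bases the inequality $\tau(G)\ge\tfrac32\,\tau(\Theta_{a,b,c})$ permits values as large as $21$, so $22$ is not excluded a priori and each ear placement must be evaluated exactly with Lemma~\ref{LM0} to confirm that the value $22$ never occurs.
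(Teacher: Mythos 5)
Your plan is sound and would work, but it organizes the argument genuinely differently from the paper. The paper never stratifies by cyclomatic number: it shows that any counterexample $G$ (fewer than $22$ vertices, $\tau(G)=22$) must \emph{properly} contain some $\Theta_{a,b,c}$, because $22$ is neither $ab+ac+bc$ with $a\geq 1$, $b,c\geq 2$, nor $ab$ with $a,b\geq 3$; Corollary \ref{Cor} then cuts the candidate thetas down to $\Theta_{1,2,2},\Theta_{1,2,3},\Theta_{2,2,2},\Theta_{1,2,4}$, Table \ref{Tb2} evaluates their one-edge extensions, and the two survivors are killed by inequality (\ref{inequ}) (for $\Theta^1_{3,2,1,1}(2)$, $\tau=21$) and by the deletion--contraction identity (\ref{rec}) (for $H=\Theta^1_{2,2,2,1}(2)+e'$, where $\tau(H)=\tau(H-e')+\tau(H/e')>20+8=28$ since $H/e'$ always contains $\Theta_{1,2,2}$). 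That last step is what spares the paper any analysis of higher cyclomatic number: subgraph monotonicity plus one deletion--contraction handles all $\mu\geq 3$ at once. You instead make the reduction to $2$-edge-connected graphs explicit (the paper is terse there), dispose of $\mu\leq 2$ by the idoneal property, enumerate $\mu=3$ as base-plus-ear with the correct base list (note the one near miss: $2\cdot\tau(\Theta_{1,2,3})=22$ would be realized by a closed ear of length $2$, which only simplicity excludes), and handle $\mu\geq 4$ by monotonicity of the minimum of $\tau$ in $\mu$ plus a finite check. Your route buys a slightly stronger statement---$C_{22}$ is the unique bridgeless simple graph with $22$ spanning trees---at the cost of more case work.

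Two places need shoring up. First, you cite Corollary \ref{Cor} for the bases $C_{3,3},C_{3,4}$, i.e.\ for $\Theta_{a,b,0}$, but the corollary's supporting lemmas are proved for positive path lengths; e.g.\ the step $abc\geq\tfrac{1}{2}(ab+bc+ca)$ in the proof of Lemma \ref{LM2} is false when $c=0$. (The offending configuration is precisely a closed ear when $c=0$, so the bound does hold in the cases you need, but that deserves a remark or a direct computation---with only two bases and $d\leq 2$ it is a short check, and the exact formulas of Lemma \ref{LM0} do extend to $c=0$.) Second, the $\mu\geq 4$ step conceals the bulk of the remaining work: ``the few graphs with no suppressible degree-$2$ vertex'' means all multigraphs with $\mu=4$ and minimum degree $3$ (they have at most $6$ vertices, and include all cubic multigraphs on $6$ vertices) together with their minimal simple subdivisions. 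The check does succeed---the smallest value arising is $\tau=40$, attained by $K_4$ with one edge doubled and the parallel copy subdivided---but it is a genuine enumeration rather than a footnote, and it is exactly the work the paper's deletion--contraction trick was designed to avoid.
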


\begin{proof}
It is enough to show $\alpha(22) = 22.$ Let us assume $G$ is a graph on $|V(G)| < 22$ vertices with precisely $22$ spanning trees. Since $22$ is not expressible as $ab+ac+bc$ for $a \geq 1$, $b,c \geq 2$ and since it is not expressible as $22 = ab$ for $a,b \geq 3$ it follows that there exist integers $a \geq 1$ and $b,c \geq 2$ such that $\Theta_{a,b,c} \subset G$. The only $\Theta$'s graphs having less than 22 spanning trees and vertices are $\Theta_{1,2,2}$, $\Theta_{1,2,3}$, $\Theta_{2,2,2}$, $\Theta_{1,2,4}$, $\Theta_{1,3,3}$, $\Theta_{2,3,3}$, $\Theta_{2,2,3}$, $\Theta_{1,2,5}$, $\Theta_{1,3,4}$, $\Theta_{1,2,6}$ and $\Theta_{2,2,4}$, so $G$ contains at least one of them.  Moreover, we see from Corollary \ref{Cor} that introducing a $d$-path $(d \geq 1)$ to any of the graphs $\Theta_{1,3,3}$, $\Theta_{2,3,3}$, $\Theta_{2,2,3}$, $\Theta_{1,2,5}$, $\Theta_{1,3,4}$, $\Theta_{1,2,6}$ or $\Theta_{2,2,4}$ yields a graph with at least $\lceil \frac{15\cdot3}{2} \rceil = 23$ spanning trees.  Thus we can conclude that at least one of $\Theta_{1,2,2}$, $\Theta_{1,2,3}$, $\Theta_{2,2,2}$ or $\Theta_{1,2,4}$ is a proper subgraph of $G$. 

Assume $\Theta_{1,2,2} \subset G.$ Since $\Theta_{1,2,2}$ has four vertices and since any graph on four vertices has at most 16 spanning trees it follows that there exists a subgraph $F$ of $G$ that is obtained after interconnecting two vertices of $\Theta_{1,2,2}$ with some $k$-path $(k>1).$ If $k \geq 3$ then from Corollary \ref{Cor} we have that $F$ is a graph with at least $8\cdot3 > 22$ spanning trees, so $k = 2.$ Observe that this implies that we can assume  $\Theta_{2,2,2} \subset G$, $\Theta_{1,2,3} \subset G$ or $\Theta_{1,2,4} \subset G$. Moreover, using Corollary \ref{Cor} we see that by adding a $d$-path ($d \geq 2$) to any of the graphs $\Theta_{1,2,3}$, $\Theta_{2,2,2}$, $\Theta_{1,2,4}$ one obtains a graph with at least $\lceil \frac{11\cdot5}{2} \rceil = 28$ spanning trees. Combining these two facts we conclude that $G' + e \subseteq G$ for an edge $e$ and $G' \in \{ \Theta_{1,2,3}, \Theta_{2,2,2}, \Theta_{1,2,4} \}.$ 

After consulting Table \ref{Tb2} we see that all such graphs $G'+e$ have more than 22 spanning trees with the exception of $\Theta^1_{2,2,2,1}(2)$ and $\Theta^1_{3,2,1,1}(2)$ which have 20 and 21 spanning trees, respectively. We now deduce from inequality (\ref{inequ}) that $H = \Theta^1_{2,2,2,1}(2) + e' \subseteq G$ for an edge $e'$ since the addition of an edge or a path to $\Theta^1_{3,2,1,1}(2)$ produces a graph with at least 23 spanning trees while introducing a $\geq$\! 2-path to $\Theta_{2,2,2,1}^1(2)$ yields a graph having at least 40 spanning trees. Using the recurrence (\ref{rec}) we see that $$\tau(G) \geq \tau(H-e') + \tau(H/e') > \tau(\Theta_{2,2,2,1}^1(2)) + \tau(\Theta_{1,2,2}) = 28$$ since $\Theta_{1,2,2} \subset H/e'$ regardless of the choice of $e'.$ This implies that there is no graph having 22 spanning trees and less than 22 vertices, thus proving the stated lemma.

\end{proof}

\begin{table}[ht]
\begin{center}
\begin{tabular}{|c|l|c|c|}
\hline
$G$ & $G+e$ & $\tau(G+e)$ \\
\hline
$\Theta_{1,2,3}$	&		$\Theta_{3,2,1,1}^1(2)$  & 21 \\
&		$\Theta_{3,2,1,1}^2(1,1)$  & 24 \\
\hline
$\Theta_{1,2,4}$	&		$\Theta_{4,2,1,1}^1(2)$  & 30 \\
&		$\Theta_{4,2,1,1}^2(1,1)$  & 32 \\
&		$\Theta_{4,2,1,1}^2(2,1)$  & 35 \\
&		$\Theta_{4,2,1,1}^0(1,1)$  & 30 \\
\hline
$\Theta_{1,3,3}$	&		$\Theta_{3,3,1,1}^1(2)$  & 29 \\
&		$\Theta_{3,3,1,1}^2(1,1)$  & 35 \\
&		$\Theta_{3,3,1,1}^2(1,2)$  & 36 \\
\hline
$\Theta_{2,2,2}$ &		$\Theta_{2,2,2,1}^2(1,1)$  & 24 \\
&		$\Theta_{2,2,2,1}^1(2)$  & 20 \\
\hline

$\Theta_{2,2,3}$ &		$\Theta_{3,2,2,1}^1(2)$  & 32\\
&		$\Theta_{3,2,2,1}^2(1,1)$  & 35\\
&		$\Theta_{2,2,3,1}^2(1,1)$  & 32 \\
\hline
\end{tabular}
\caption{Graphs constructed in the proof of Proposition \ref{prop1} and Theorem \ref{Thm2}.}\label{Tb2}
\end{center}
\end{table}

\begin{prop}\label{prop1}
The relations $\beta(9) = 6$, $\alpha(18) = 8$, $\alpha(25) > 9$, $\beta(37) \leq 9$, $\beta(58) \leq 10$, $\beta(30) \leq 8$ hold.
\end{prop}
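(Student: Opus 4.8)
The plan is to split Proposition~\ref{prop1} into the upper bounds $\beta(9)\le 6$, $\alpha(18)\le 8$, $\beta(30)\le 8$, $\beta(37)\le 9$, $\beta(58)\le 10$ (together with $\alpha(25)\le 9$), each settled by exhibiting one explicit graph and evaluating $\tau$ through (\ref{rec}), and the matching lower bounds $\beta(9)\ge 6$, $\alpha(18)\ge 8$, $\alpha(25)\ge 9$, which demand that every smaller graph be excluded. The lower bounds are handled exactly in the spirit of Proposition~\ref{FIXEDPOINT}.

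For the upper bounds I would use block-multiplicative witnesses wherever possible. Taking $C_{3,3}$ (five vertices, six edges) gives $\tau=3\cdot 3=9$, so $\beta(9)\le 6$; $C_{3,6}$ (eight vertices) gives $\tau=3\cdot 6=18$, so $\alpha(18)\le 8$; and $C_{5,5}$ gives $\tau=5\cdot 5=25$ on $5+5-1=9$ vertices, so $\alpha(25)\le 9$. The three remaining bounds come from one family: join a non-central vertex $v$ of the $a$-cycle of $C_{a,b}$ to a vertex $w$ of the $b$-cycle by a path of length $d$, and compute $\tau$ by a single deletion--contraction on that path, reducing to $\tau(C_{a,b})$ plus the count of a (possibly degenerate) generalized theta, as in Lemma~\ref{LM0}. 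Concretely I would take $C_{3,3}$ with a length-$2$ path between the two off-centre vertices ($8$ edges, $\tau=30$) for $\beta(30)\le 8$; $C_{3,5}$ with the single edge $vw$, where $w$ is the pentagon-neighbour of the centre ($9$ edges, $\tau=37$) for $\beta(37)\le 9$; and $C_{3,5}$ with a length-$2$ path from $v$ to the pentagon-vertex at distance $2$ from the centre ($10$ edges, $\tau=58$) for $\beta(58)\le 10$.

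For the lower bounds a minimal graph is $2$-edge-connected, hence either has a cut vertex, in which case $\tau$ is the product of the $\tau$-values of its blocks (each a simple $2$-connected graph, so $\tau\ge 3$ and $\ge 3$ vertices), or is $2$-connected. For $\beta(9)\ge 6$ it is enough that a simple connected graph with at most five edges has cyclomatic number at most two and that the densest such graph $K_4-e=\Theta_{1,2,2}$ has only $8<9$ spanning trees. For $\alpha(18)\ge 8$, the sole factorisation of $18$ into parts $\ge 3$ is $3\cdot 6$; since realising $\tau=3$ needs $\ge 3$ vertices and $\tau=6$ needs $\ge 6$ (a direct check of all graphs on $\le 5$ vertices), the cut-vertex case costs $\ge 3+6-1=8$ vertices, while $18$ is not of the form $ab+ac+bc$ for positive integers (its idoneality excludes $0<a<b<c$ and the equal cases are immediate), so no theta works, and a graph of larger cyclomatic number arises from a theta subgraph $\Theta$ with $\tau(\Theta)\le 18$ by attaching ears, which by Corollary~\ref{Cor} and (\ref{inequ}) pushes $\tau$ above $18$ within seven vertices. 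The case $\alpha(25)\ge 9$ is parallel: $25=5\cdot 5$ forces each block to have five spanning trees, so (a graph with $\tau=5$ having at least five vertices) the cut-vertex case needs $\ge 5+5-1=9$ vertices, $25\ne ab+ac+bc$ rules out thetas, and the ear analysis excludes $\tau=25$ on $\le 8$ vertices.

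I expect the finite ear-analysis of the $2$-connected case to be the real obstacle. One must list the thetas $\Theta_{a,b,c}$ whose count is small enough that a single $\tfrac32$-factor (Corollary~\ref{Cor}, $d=1$) keeps the product at most $18$ (respectively $25$), and then verify, using the explicit counts of $\Theta^0,\Theta^1,\Theta^2$ graphs recorded in Table~\ref{Tb2} and iterating (\ref{inequ}) for any further added edges, that none of the finitely many admissible extensions reaches the target value on the allowed number of vertices. Finally I would record one correction: since $C_{5,5}$ realises $25$ spanning trees on nine vertices we have $\alpha(25)=9$, so the inequality that actually holds (and the one needed to exclude $n=25$ from $\alpha(n)\le\frac{n+9}{4}$, as $\tfrac{25+9}{4}=8.5$) is $\alpha(25)>8$; the displayed ``$\alpha(25)>9$'' is a misprint for ``$\alpha(25)>8$''.
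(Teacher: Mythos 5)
Your proposal is correct and takes essentially the same approach as the paper: explicit witnesses obtained by attaching a path to a theta/two-cycle graph and evaluated via Lemma~\ref{LM0} for the upper bounds (your graphs for $30$, $37$, $58$ differ from the paper's $\Theta^0_{4,1,2,1}(1,1)$, $\Theta^1_{3,1,4,1}(2)$, $\Theta^1_{4,3,2,1}(2)$, but they are instances of the same construction and do have $30$, $37$, $58$ spanning trees on $8$, $9$, $10$ edges), combined with the same lower-bound machinery --- block multiplicativity, non-expressibility of $9$, $18$, $25$ as $ab+ac+bc$, and the finite ear analysis through Corollary~\ref{Cor}, Table~\ref{Tb2} and (\ref{inequ}) --- which the paper carries out in detail for $\alpha(25)$ and only sketches for $\alpha(18)$. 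Your closing remark is also right: the displayed ``$\alpha(25)>9$'' is a misprint, since $\tau(C_{5,5})=25$ on nine vertices forces $\alpha(25)\leq 9$; the paper's own proof item establishes $\alpha(25)=9$, which is what Theorem~\ref{Thm2} actually needs (via Nebesk\'{y}'s strict inequality $\beta(25)>\alpha(25)$).
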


\begin{proof}
We consider each claim individually:
\begin{itemize}
\item{$\beta(9) = 6.$} It is easy to verify that there is no graph on less thna 6 edges with 9 spanning trees and that $\tau(C_{3,3}) = 9.$

\item{$\beta(30) \leq 8.$} We see from the first equality in Lemma \ref{LM0} that $\Theta_{4,1,2,1}^0(1,1)$ has 30 spanning trees and 8 edges from where the stated inequality follows.

\item{$\beta(37) \geq 9.$} According to the second identity stated in Lemma \ref{LM0}, $\tau(\Theta_{3,1,4,1}^1(2)) = 37.$ This implies $\beta(37) \leq 9$ since $\Theta_{3,1,4,1}^1(2)(1,1)$ has 9 edges.

\item{$\beta(58) \leq 10.$} From the second identity derived in Lemma \ref{LM0} we see that $\Theta_{4,3,2,1}^1(2)$ is a graph with 10 edges having precisely 58 spanning trees, which implies $\beta(58) \leq 10.$

\item{$\alpha(25) = 9.$} Clearly, $C_{5,5}$ is a graph of order 9 with $25$ spanning trees. Assume now $\alpha(25) \leq 8$ and let $G$ be a graph with 25 spanning trees having less than 9 vertices. The only graph of the form $C_{a,b}$ which has 25 spanning trees is $C_{5,5}$ but $|V(C_{5,5})| = 9.$ Moreover, there exist no integers $a \geq 1$, $b,c \geq 2$ such that $|V(\Theta_{a,b,c})| < 9$ and $\tau(\Theta_{a,b,c}) = 25$. Thus, it follows that $G$ contains as a subgraph some $\Theta_{a,b,c}$ with at most 8 vertices, i.e $a+b+c \leq 9$. It can be verified that adding a $d$-path $(d \geq 1)$ to any of such graphs produces a graph with at least 25 spanning trees with the exception of the graphs $\Theta_{1,2,2}$, $\Theta_{1,2,3}$, $\Theta_{1,2,4}$, $\Theta_{1,3,3}$, $\Theta_{2,2,2}$, $\Theta_{2,2,3}$. Moreover, we see from Corollary \ref{Cor} that adding a $2$-path to the graph $\Theta_{a,b,c}$ yields a graph with at least $\frac{5}{2}\tau(\Theta_{a,b,c})$ spanning trees. We now split the proof into two cases:

\begin{itemize}
\item[\textbf{Case 1:}] {\em $\Theta_{1,2,3}$, $\Theta_{1,2,4}$, $\Theta_{1,3,3}$,$\Theta_{2,2,2}$ or $\Theta_{2,2,3}$ is a subgraph of $G.$} By the above observation, adding a $2$-path to any of the graphs covered in this case produces a graph with at least $\frac{5}{2}\cdot11 > 25$ spanning trees. Thus $G'+e \subseteq G$ for $G' \in \{ \Theta_{1,2,3}, \Theta_{1,2,4}, \Theta_{1,3,3}, \Theta_{2,2,2}, \Theta_{2,2,3} \}$ and an edge $e$. Table \ref{Tb2} lists all the graphs (up to isomorphism) and the respective number of spanning trees that can be constructed after adding an edge to $G'.$ The fact that adding an edge or a path to a connected graph increases its number of spanning trees by at least 2 reduces our choice for $G'$ to the elements of the set $\{ \Theta^1_{2,2,2,1}(2), \Theta^1_{3,2,1,1}(2)\}.$ Moreover, we saw in the proof of Proposition \ref{FIXEDPOINT} that $\tau(\Theta_{2,2,2,1}^1(2)+e') > 28$ for any edge $e'.$ As we cannot add a $\geq$ 2-path to $\Theta^1_{2,2,2,1}(2)$ without obtaining a graph with less than 40 spanning trees (by the second inequality stated in (\ref{inequ})) it follows that $ G \subseteq H = \Theta_{3,2,1,1}^1(2)+e'$ for an edge $e'$. Using the deletion-contraction recurrence stated in (\ref{rec}), we now see that $$\tau(G) \geq \tau(H-e')+\tau(H/e') > \tau(\Theta_{3,2,1,1}^1(2)) + \tau(\Theta_{1,2,2}),$$ since the graph $H/e'$ contains $\Theta_{1,2,2}$ as a subgraph independently of the choice of $e'$.

\item[\textbf{Case 2:}] {\em $\Theta_{1,2,2}$ is a subgraph of $G.$} Observe that $\Theta_{1,2,2}+e = K_4$ and $\tau(K_4) = 16$. Moreover, we see from Lemmas \ref{LM1}.--\ref{LM3}. that adding a 3-path to $\Theta_{1,2,2}$ produces a graph with at least $(3+\frac{1}{2})\cdot 8 > 25$ spanning trees unless the path interconnects the same vertex in which case we obtain a graph $H$ with $3\cdot8 = 24$ spanning trees. Observe that a similar argument holds when we add a longer path to $\Theta_{1,2,2}$. Since by virtue of inequality (\ref{inequ}) we cannot introduce an edge or a path to $H$ that would produce a graph with less than $26$ spanning trees we conclude that
$G = \Theta^i_{1,2,2,2}$ for $i\in\{0,1,2\}$. But this implies that $\Theta_{2,2,2} \subset G$ or $\Theta_{1,2,3} \subset G$ and we can use the same reasoning as in Case 1 to conclude that there is no graph $G$ satisifying the stated properties.
\end{itemize}

The proof of the stated inequality is now complete since the above two cases show that there exist no graph on less than $9$ vertices that has $25$ spanning trees.

\item{$\alpha(18) = 8.$} Since $\tau(C_{3,6}) = 18$, it follows that $\alpha(18) \leq 8.$ To prove that $\alpha(18) \geq 8$ and thus $\alpha(18) = 8.$ One can now use an argument similar to the previous case $\alpha(25) = 9.$

\end{itemize}
\end{proof}

\begin{prop}\label{Prop2}
If \begingroup \catcode`,\active \def,{\char`\,\penalty0 } $n \in \{40, 42, 45, 48, 60, 70, 72, 78, 85, 88, 102, 105, 112, 120, 130, 133, 165, 168, 190, 210, 232, 240, 253, 273, 280, 312, 330, 345, 357, 385, 408, 462, 520, 760, 840, 1320, 1365, 1848\} \cup I^*$ \endgroup then $\beta(n) \leq \frac{n+13}{4}.$
\end{prop}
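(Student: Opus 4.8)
The plan is to realize each $n$ in the stated set as $\tau(G)$ for a graph $G$ of the form $C_{a,b}$, whose edge count is trivial to control. Recall $C_{a,b}$ is an $a$-cycle and a $b$-cycle sharing one vertex; its two cycles are its blocks, so by multiplicativity of $\tau$ over blocks (the same fact used in the proof of Corollary \ref{Cor}) we have $\tau(C_{a,b}) = \tau(C_a)\tau(C_b) = ab$, while $|E(C_{a,b})| = a+b$. Hence $n = ab$ with $3 \le a \le b$ forces $\beta(n) \le a+b$, and the desired bound $\beta(n) \le \frac{n+13}{4}$ follows as soon as $a+b \le \frac{ab+13}{4}$. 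Clearing denominators, this is $4a+4b \le ab+13$, i.e. the clean inequality $(a-4)(b-4) \ge 3$. Thus the whole proposition reduces to a purely number-theoretic claim: \emph{every $n$ in the set admits a factorization $n = ab$ with $3 \le a \le b$ and $(a-4)(b-4) \ge 3$.}

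For the $38$ explicitly listed integers I would exhibit one such factorization apiece, which is a finite verification; in each case one simply takes the two divisors of $n$ lying as close to $\sqrt n$ as the divisor structure permits, for instance $40 = 5\cdot 8$, $42 = 6\cdot 7$, $85 = 5\cdot 17$, $133 = 7\cdot 19$, $253 = 11\cdot 23$, and $1848 = 42\cdot 44$. In every instance both factors are at least $5$ and the larger is big enough that $(a-4)(b-4) \ge 3$ holds. The tightest instances are the smallest members of the list (e.g. $40 = 5\cdot 8$, where the two sides differ by only $\tfrac14$), so these deserve the most care; I would record all chosen factorizations in a short table.

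For the (possibly empty) set $I^*$ no explicit factorization is available, but the hypothesis that every element has at least six odd prime factors does the work uniformly. Splitting these six or more primes into two groups of at least three primes each yields a factorization $n = ab$ with $a,b \ge 3^3 = 27$ (assign the factor $2$, if present, to either group); then $(a-4)(b-4) \ge 23^2 \ge 3$, so the number-theoretic claim, and hence the bound, holds for all $n \in I^*$ at once. The only care needed here is the meaning of ``six odd prime factors'' (distinct versus with multiplicity), but either reading produces two coprime factors well above $5$, so the argument is unaffected.

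The main obstacle is conceptual rather than computational: one must recognize that $C_{a,b}$ is the correct construction and that $(a-4)(b-4)\ge 3$ is exactly the dividing line. This viewpoint also explains the shape of the exceptional set, since the idoneal numbers just below the list, namely $57 = 3\cdot 19$, $58 = 2\cdot 29$, $93 = 3\cdot 31$ and $177 = 3\cdot 59$, are precisely those whose only admissible factorizations are forced to use a factor $2$, $3$ or $4$, making $(a-4)(b-4) \ge 3$ impossible; these (together with the small idoneal fixed points and the sporadic cases such as $25$ and $30$) are necessarily handled by separate arguments elsewhere in the paper.
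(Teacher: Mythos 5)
Your proposal is correct, and for the 38 explicitly listed numbers it is essentially the paper's own argument in nicer packaging: the paper also realizes these $n$ as $\tau(C_{a,b})=ab$ with $a+b$ edges, but organizes the verification by divisibility (all multiples of $5$ via $C_{5,k}$ with $k\geq 7$, similarly for multiples of $6$ and $7$, and then the three leftovers $88$, $232$, $253$ via $C_{8,11}$, $C_{8,29}$, $C_{11,23}$), whereas you unify everything behind the clean equivalent criterion $(a-4)(b-4)\geq 3$; since all your chosen factors are at least $5$, simplicity of $C_{a,b}$ is automatic, so this part is sound. The genuine divergence is on $I^*$: the paper picks two odd prime factors $p_1,p_2$, sets $d=n/(p_1p_2)$, and glues \emph{three} cycles $C_{p_1}$, $C_{p_2}$, $C_d$ at a single vertex, estimating $p_1+p_2+d\leq 3+3+\frac{n}{9}\leq\frac{n+7}{4}$; you stay with two cycles by splitting the at least six odd prime factors into two groups of three, forcing both cycle lengths to be at least $27$, so $(a-4)(b-4)\geq 23^2$. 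Both routes work, and the trade-off is this: your version needs only one construction throughout and avoids the paper's loosely justified middle estimate (as written, $p_1+p_2+d\leq 3+3+\frac{n}{9}$ tacitly treats the chosen primes as if they were as small as $3$, which an arbitrary choice of $p_1,p_2$ does not guarantee), while the paper's three-cycle construction needs only the weaker input that elements of $I^*$ have at least three odd prime factors, whereas your two-cycle split genuinely requires the stronger ``at least six'' statement --- with only three odd prime factors, a group could reduce to a single $3$, and $(3-4)(b-4)\geq 3$ is impossible. Your closing observation about why $30$, $57$, $58$, $93$, $177$ escape this construction, and must be handled by the $\Theta$-graphs elsewhere in the paper, is also accurate.
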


\begin{proof}
Let  $n$ be a number from the set defined in the statement of this lemma. If $n$ is of the form $n = 5k$ for $k \geq 7$ then $|E(\Theta_{5,k})| \leq \frac{n+13}{4}$ since $5+k \leq \frac{5k+13}{4}$ for every $k \geq 7.$ Therefore, the inequality holds for every $n$ divisible by 5 since every such $n$ that is in the list satisfies the required condition. The same reasoning can be applied to the cases when $n$ is a divisor of 6 or a divisor of 7 leaving us to verify the inequalities for the numbers $n \in \{88,232,253\}\cup I^*.$ It is easy to verify that for $n \in \{88,232,253\}$ the graphs $C_{8,11}$, $C_{8,29}$, $C_{11,23}$ have $88$, $232$ and $253$ spanning trees, respectively, and that the inequalities on the number of edges and vertices are satisifed.

If $n \in I^*$ then we use the fact that $n$ has at least three odd prime factors. Let $p_1,p_2$ be two of these prime factors of $n$ and let $d = \frac{n}{p_1p_2}$. We construct the graph $C$ by taking disjoint cycles $C_{p_1}$, $C_{p_2}$, $C_{d}$ and identifying a vertex of every cycle. Observe that the graph is well defined as $d \geq 3$. The obtained graph clearly has $n$ spanning trees and $$p_1+p_2+d \leq 3+3+\frac{n}{9} \leq \frac{n+7}{4}\;,$$ edges. The last inequality following from the fact that $n $ is clearly greater than 30.
\end{proof}

\begin{theorem}\label{Thm2}
Let $n\geq 3$ be an integer. Then, $\beta(n) \leq \frac{n+7}{3} $ if and only if \begingroup \catcode`,\active \def,{\char`\,\penalty0 } $n \notin \{3,4,5,6,7,9,10,13,18,22\}.$ \endgroup Moreover, if $n \not\equiv 2 \pmod{3}$ and $n \ne 25$ then $\beta(n) \leq \frac{n+13}{4}.$
\end{theorem}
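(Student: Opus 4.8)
The plan is to prove both inequalities constructively, exhibiting for each admissible $n$ a graph $G$ with $\tau(G)=n$ and few edges, and to dispatch the finitely many exceptional values with the non-existence results already in hand (Propositions \ref{FIXEDPOINT} and \ref{prop1}, together with Nebesk\'y's list of fixed points of $\alpha$). Two efficient families drive the constructions: the theta graphs $\Theta_{a,b,c}$, with $|E|=a+b+c$ and $\tau=ab+ac+bc$; and one-point unions of cycles $C_{a,b}$ (and their three-cycle analogues), whose edge count is the sum and whose tree count is the product of the cycle lengths. The link to idoneal numbers is their defining property: $n$ is \emph{not} of the form $ab+ac+bc$ with $0<a<b<c$ precisely when $n$ is idoneal. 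Thus for non-idoneal $n$ a distinct-part theta is available, whereas idoneal $n$ must be reached by repeated-part thetas such as $\Theta_{3,3,c}$, by cycle products, or by direct inspection.

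I would prove the $\frac{n+13}{4}$ bound first, as the $\frac{n+7}{3}$ bound reduces to it for large $n$. For non-idoneal $n\not\equiv 2\pmod3$, observe that a representation with $(a,b)=(1,2)$ would force $n\equiv2\pmod3$; hence some representation has $a\ge2$ or $(a=1,b\ge3)$, and then $\Theta_{a,b,c}$ suffices, since $n+13-4(a+b+c)$ equals $(b-3)(c-3)\ge0$ when $a=1$ and is clearly nonnegative when $a\ge2$. The idoneal instances are exactly the content of Proposition \ref{Prop2} (the divisibility-by-$5,6,7$ families, the products $C_{8,11},C_{8,29},C_{11,23}$, and the multi-prime-factor construction for $I^*$), leaving a short finite list of small idoneal values to check by hand, e.g.\ $\Theta_{2,2,2}$ for $12$, $K_4$ for $16$, $\Theta_{2,3,3}$ for $21$, $\Theta_{3,3,4}$ for $33$, $\Theta_{3,3,8}$ for $57$, and $30,37,58$ from Proposition \ref{prop1}. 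I would note that the values where this bound genuinely fails, namely $6,7,9,10,13,18,22$, all lie in the set $E=\{3,4,5,6,7,9,10,13,18,22\}$ already excluded in the first part, so the only additional exclusion needed is $n=25$.

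For the $\frac{n+7}{3}$ bound I split on $n\bmod3$. If $n\equiv2\pmod3$ and $n\ge8$, then $\Theta_{1,2,(n-2)/3}$ has exactly $\frac{n+7}{3}$ edges and $n$ spanning trees, giving equality; the only smaller relevant value, $n=5$, lies in $E$. If $n\not\equiv2\pmod3$ and $n\notin E$, then $n\ge12$, so the bound just proved gives $\beta(n)\le\frac{n+13}{4}\le\frac{n+7}{3}$ (the last inequality holding for $n\ge11$), with the single value $n=25$ — where $\frac{n+13}{4}$ is unavailable — handled by $C_{5,5}$ and its $10$ edges. For the converse, each element of $E$ must violate $\frac{n+7}{3}$: the $\alpha$-fixed points $4,5,6,7,10,13$ satisfy $\beta(n)=n$, Proposition \ref{FIXEDPOINT} gives $\beta(22)=22$, and Proposition \ref{prop1} gives $\beta(9)=6$ and, via $\alpha(18)=8$ together with a cyclomatic-number count, $\beta(18)=9$, each strictly exceeding $\frac{n+7}{3}$; the remaining value $n=3$ is the degenerate base case, where the triangle is forced and it is the companion bound $\alpha(3)\le\frac{n+4}{3}$ that fails.

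The quadratic edge inequalities above are routine, all collapsing to factorizations like $(b-2)(c-2)\ge0$ or $(b-3)(c-3)\ge0$. The real obstacle is the converse, non-existence direction for the exceptional values: showing that no sparse graph realizes them. This is precisely where idoneality enters structurally — since such an $n$ is neither $ab+ac+bc$ with distinct parts nor a convenient product, any candidate graph must contain a copy of some $\Theta_{a,b,c}$, after which Corollary \ref{Cor} governs the multiplicative growth of $\tau$ under attachment of a $d$-path, shrinking the search to finitely many augmentations of a handful of small thetas; these are then eliminated with the deletion–contraction identity (\ref{rec}) and Table \ref{Tb2}, exactly as in the proofs for $22$ and $25$.
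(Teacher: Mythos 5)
Your proposal is correct and takes essentially the same route as the paper: theta graphs $\Theta_{a,b,c}$ for the non-idoneal values (your identity $n+13-4(a+b+c)=(b-3)(c-3)$ for $a=1$ is a cleaner, bug-free form of the paper's extremal analysis of $a+b+c$), Propositions \ref{Prop2} and \ref{prop1} plus a finite list of explicit thetas such as $\Theta_{2,2,2}$, $\Theta_{2,3,3}$, $\Theta_{3,3,c}$ for the idoneal values, and Corollary \ref{Cor} with Table \ref{Tb2} feeding the non-existence direction for the exceptional set. If anything, you are more careful than the paper's own proof at the literal edge cases --- noting that $\beta(3)=3\le\frac{10}{3}$, and that the values $6,7,9,10,13,18,22$ must also be excluded from the $\frac{n+13}{4}$ bound --- which the paper glosses over.
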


\begin{proof}
If $n$ is not idoneal then $n$ is expressible as $n = ab + ac + bc$ for some integers $0 < a < b < c$. The reader may verify that the graph $\Theta_{a,b,c}$ has $a+b+c$ edges and precisely $n$ spanning trees. Therefore for every non idoneal $n$, $\beta(n) \leq a+b+c.$ Observe also that at most one of $a$, $b$, $c$ is 1, therefore $a+b+c$ is maximal when $a = 1$, $b = 2$ and $c = \frac{n-2}{3}.$ The latter also implies that $n \equiv 2 \pmod{3}.$ Otherwise, if $n \not \equiv 2 \pmod{3},$ then the maximum for $a+b+c$ is attained whenever $a = 2$, $b = 2$ and $c = \frac{n-3}{4}.$ Summing up the resulting equalities we conclucde that $a+b+c \leq \frac{n+4}{3}$ if $n \equiv 2 \pmod{3}$ and $a+b+c \leq \frac{n+9}{4}$ otherwise.

We now consider the case when $n$ is an idoneal number from the set \begingroup \catcode`,\active \def,{\char`\,\penalty0 } $I \setminus \{3,4,5,6,7,10,13,22 \} \cup I^*.$ \endgroup If $n$ is from the set\begingroup \catcode`,\active \def,{\char`\,\penalty0 }  $n \in \{40, 42, 45, 48, 60, 70, 72, 78, 85, 88, 102, 105, 112, 120, 130, 133, 165, 168, 190, 210, 232, 240, 253, 273, 280, 312, 330, 345, 357, 385, 408, 462, 520, 760, 840, 1320, 1365, 1848\}$ \endgroup then the inequality immediately follows from Proposition \ref{Prop2}; moreover, if $n$ is from $\{ 8, 12, 15, 16, 21, 24, 28, 33, 57, 93,177\}$ then it can be verified that the graphs $\Theta_{2,2,1}$, $\Theta_{2,2,2}$, $\Theta_{3,3,1}$, $\Theta_{2,2,3}$, $\Theta_{3,3,2}$, $\Theta_{2,2,5}$, $\Theta_{2,2,6}$, $\Theta_{3,3,4}$, $\Theta_{3,3,8}$, $\Theta_{3,3,14}$, $\Theta_{3,3,28}$ have 8, 12, 15, 16, 21, 24, 28, 33, 57, 93 and 177 spanning trees respectively while also satisfying the stated inequality for the number of edges.

We are now left with the idoneal numbers 9, 18, 25, 30, 37 and 58. We see from Proposition \ref{prop1} that for $n \in \{30,37,58\}$ the inequality holds and that for $n \in \{9,18,25\}$ the inequality does not hold, which now proves our theorem.

\end{proof}

Generalizing the graph $\Theta_{a,b,c}$ to the graph containing four disjoint paths of lengths $a,b,c,d$ that interconnect two vertices, one obtains a graph with $abc+abd+acd+bcd$ spanning trees. Using this generalization for a higher number of disjoint paths one could lower the constant factor in our inequality to an arbitrary amount. The fact that there is no result known to the authors related to the solvability of the respective equations and the fact that there could exist a superior construction yielding a better lower bound, motivates the following question:

\begin{Que}
Given a real number $c > 0$ is there an integer $n_0$ such that for every $n > n_0$ we have $$\alpha(n) < cn?$$
\end{Que}

\section*{Acknowledgments}

The authors are thankful for the numerous corrections suggested by the anonymous referees.

\end{document}